\documentclass[11pt]{amsart}

\newtheorem{theorem}{Theorem}

\newtheorem{corollary}[theorem]{Corollary}

\newtheorem{lemma}[theorem]{Lemma}

\newtheorem{problem}[theorem]{Problem}
\newtheorem{proposition}[theorem]{Proposition}
\newtheorem{remark}[theorem]{Remark}


\usepackage[active]{srcltx} 

\def\J#1#2#3{ \left\{ #1,#2,#3 \right\} }

\def\11{\textbf{$1$}}

\usepackage{enumerate}
\usepackage{amsmath}
\usepackage{amssymb}

\begin{document}

\title[Local triple derivations on C$^*$-algebras]{Local triple derivations on C$^*$-algebras}

\author[Burgos]{Maria Burgos}
\email{maria.burgos@uca.es}
\address{Departamento de Matematicas, Facultad de Ciencias Sociales y de la Educacion,  Universidad de Cadiz, 11405, Jerez de la Frontera, Spain.}

\author[Fern\'{a}ndez-Polo]{Francisco J. Fern\'{a}ndez-Polo}
\email{pacopolo@ugr.es}
\address{Departamento de An{\'a}lisis Matem{\'a}tico, Facultad de
Ciencias, Universidad de Granada, 18071 Granada, Spain.}

\author[Garc{\' e}s]{Jorge J. Garc{\' e}s}
\email{jgarces@correo.ugr.es}
\address{Departamento de An{\'a}lisis Matem{\'a}tico, Facultad de
Ciencias, Universidad de Granada, 18071 Granada, Spain.}

\author[Peralta]{Antonio M. Peralta}
\email{aperalta@ugr.es}
\address{Departamento de An{\'a}lisis Matem{\'a}tico, Facultad de
Ciencias, Universidad de Granada, 18071 Granada, Spain.}

\thanks{Authors partially supported by the Spanish Ministry of Economy and Competitiveness,
D.G.I. project no. MTM2011-23843, and Junta de Andaluc\'{\i}a grants FQM0199 and
FQM3737. The first author was partially supported by a Grant from the GENIL-YTR program.}

\subjclass[2000]{Primary 47B47, 46L57; Secondary 17C65, 46L05, 46L08 }

\date{}

\begin{abstract} We prove that every bounded local triple derivation on a unital C$^*$-algebra
is a triple derivation. A similar statement is established\hyphenation{established} in the category of unital JB$^*$-algebras.
\end{abstract}

\keywords{triple derivation, local triple derivation, generalised derivation, generalised Jordan derivation, unital C$^*$-algebra}

\maketitle
 \thispagestyle{empty}

\section{Introduction}

In a pioneering work, R. Kadison started, in 1990, the study of local derivations from an associative algebra $\mathcal{R}$ into an $\mathcal{R}$-bimodule $\mathcal{M}$ (cf. \cite{Kad90}). We recall that a
linear mapping $D : \mathcal{R} \to \mathcal{M}$ is a \emph{derivation} or an \emph{associative derivation}
whenever $D(a b) = D(a) b + a D(b),$ for every $a,b\in \mathcal{R}.$ In words of Kadison ``The defining property of a linear mapping $T : \mathcal{R} \to \mathcal{M}$ to be a \emph{local (associative) derivation} is that for
each $a$ in $\mathcal{R}$, there is a derivation $D_{a} : \mathcal{R} \to \mathcal{M}$ such that $T(a) =D_{a}(a)$''. R. Kadison proved that each
norm-continuous local derivation of a von Neumann algebra $W$ into a dual $W$-bimodule is a derivation (cf. \cite[Theorem A]{Kad90}).
B.E. Johnson extended the above result proving that every (continuous) local derivations from any C$^*$-algebra $B$ into any Banach $B$-bimodule is a derivation (see \cite[Theorem 5.3]{John01}). Concerning the hypothesis of continuity in the above result, let us briefly notice that J.R. Ringrose proved that every (associative) derivation from a C$^*$-algebra $B$ to a Banach $B$-bimodule $X$ is continuous (cf. \cite{Ringrose72}). In \cite{John01}, B.E. Johnson also gave an automatic continuity result, showing that local derivations on C$^*$-algebras are continuous even if not assumed a priori to be so.\smallskip

The above results motivated a multitude of studies on local derivations on C$^*$-algebras. There exists a rich list of references revisiting, rediscovering and extending Kadison-Johnson theorem in many directions (see, for example, \cite{Crist,HadLi,HadLi08,Hv,LarSo,LiPan,LiPanXu,Shu} and \cite{ZhanPanYang}).\smallskip

C$^*$-algebras belong to a more general class of Banach spaces, called JB$^*$-triples, which is defined in terms of algebraic, topological and geometric axioms mutually interplaying (see section 2 for more details). Originally introduced by W. Kaup in the classification of \emph{bounded symmetric domains} on arbitrary complex Banach spaces (cf. \cite{Ka}), JB$^*$-triples now have their own importance in Functional Analysis and Geometry of Banach spaces. A \emph{triple} or \emph{ternary} derivation on a JB$^*$-triple $E$ is a linear mapping $\delta : E \to E$ satisfying: \begin{equation}\label{eq derivation} \delta \J abc = \J {\delta (a)}bc + \J a{\delta (b)}c+ \J ab{\delta (c)},
\end{equation} for every $a,b,c\in E$. In the setting of JB$^*$-triples, J.T. Barton and Y. Friedman proved that every triple derivation on a JB$^*$-triple is continuous (cf. \cite{BarFri}, see also \cite{HoMarPeRu}). A \emph{local triple derivation} on $E$ is a linear map $T : E\to E$ such that for each $a$ in $E$ there exists a triple derivation $\delta_{a}$ on $E$ satisfying $T(a) = \delta_a (a).$\smallskip

Quite recently, Jordan Banach triple modules over a JB$^*$-triple and triple derivations from a JB$^*$-triple $E$ to a Jordan Banach triple $E$-module $X$ were introduced by B. Russo and the fourth author of this note. In \cite{PeRu} these authors provide necessary and sufficient conditions under which a derivation from $E$ into $X$ is continuous. We refer to \cite{PeRu} and \cite{HoPeRu} for the basic definitions and results on JB$^*$-triples, Jordan Banach triple modules and triple derivations not included in this note. Following \cite{PeRu}, a conjugate linear mapping $\delta : E \to X$ is a \emph{triple or ternary derivation} whenever it satisfies the above identity $(\ref{eq derivation})$. In particular, the dual, $E^*$, of a JB$^*$-triple $E$, is a Jordan Banach triple $E$-module and every triple derivation from $E$ into $E^*$ is continuous (see \cite[Corollary 15]{PeRu}). Furthermore, every triple derivation from a
C$^*$-algebra $B$ to a Banach triple $B$-module is automatically continuous \cite[Theorem 20]{PeRu}. A bounded conjugate linear operator $T : E \to X$ is said to be a \emph{local triple derivation} if for each $a\in E$, there exists a triple derivation $\delta_a : E \to X$ satisfying $T(a) = \delta_a (a)$. Clearly, every triple derivation is a local triple derivation, while the reciprocal implication is an open problem.

\begin{problem}\label{problem local triple derivations}
Is every local triple derivation on a JB$^*$-triple $E$ (or more generally, every local triple derivation from $E$ into a Jordan Banach triple $E$-module)
a triple derivation?
\end{problem}

In a Conference held in Hong-Kong in April 2012, M. Mackey announced a result establishing that, for each von Neumann algebra (and more generally, for every JBW$^*$-triple, i.e. a JB$^*$-triple which is also a dual Banach space), $W,$ every local triple derivation $T : W \to W$ is a triple derivation (see \cite[Theorem 5.11]{Mack}). Actually, the arguments provided by Mackey are also valid to prove that every local triple derivation on a compact JB$^*$-triple is a triple derivation. The proofs and techniques applied by M. Mackey in this result depend heavily on the particular structure of a JBW$^*$-triple and the abundance of tripotent elements in this setting. Mackey's theorem is an appropriate version of the aforementioned Kadison's theorem. The corresponding JB$^*$-triple version of Johnson's theorem is an open problem. Part of the above Problem \ref{problem local triple derivations} appears in \cite[Conjecture 6.2 (C1) and (C3)]{Mack}.\smallskip

Every C$^*$-algebra $B$ is a JB$^*$-triple with product $\J abc = \frac12\left( ab^* c+ cb^* a\right)$.
Triple and local triple derivations on $B$ make sense in this setting without any need to appeal to the above general
concepts on JB$^*$-triple setting.  The following C$^*$-version of the above Problem \ref{problem local triple derivations} is interesting by itself.

\begin{problem}\label{problem local triple derivations on Cstar}
Is every local triple derivation on a C$^*$-algebra $B$ a triple derivation?
\end{problem}

In this paper we focus on Problem \ref{problem local triple derivations on Cstar}.
Our main result shows that every local triple derivation on a unital C$^*$-algebra is a triple derivation (Theorem \ref{th Kadison thm for triple derivations on unital C*-algebras}). Section 3 contains a similar statement for local triple derivations on a unital JB$^*$-algebra. The results presented here connect local triple derivations on a unital C$^*$-algebra with generalised derivations, a concept studied by J. Li and Zh. Pan in \cite{LiPan}. We recall that a linear mapping $D$ from a unital C$^*$-algebra $A$ to a (unital) Banach $A$-bimodule $X$ is called a \emph{generalised derivation} whenever the identity $$D (ab) = D(a) b + a D(b) - a D(1) b$$ holds for every $a,b$ in $A$. We shall say that $D$ is a \emph{generalised Jordan derivation} whenever $D (a\circ b) = D(a)\circ b + a\circ D(b) - U_{a,b} D(1)$, for every $a,b$ in $A$, where the Jordan product is given by $a\circ b := \frac12 (a b + ba)$ and $U_{a,b} (x) := \frac12 ( axb + bxa) $.  Every generalised (Jordan) derivation $D : A\to X$  with $D(1) =0$ is a (Jordan) derivation. Let $A$ be a C$^*$-subalgebra of a C$^*$-algebra $B$. Suppose $B$ is unital and $A$ contains the unit, $1$, of $B$. The C$^*$-algebra $B$ can be regarded as $A$-bimodule with respect to its original product and as a (complex) Jordan Banach triple $A$-module with respect to $\J abc = \frac12\left( ab^* c+ cb^* a\right)$. In a first approach we prove that every (linear) local triple derivation from $A$ to $B$ is a generalised derivation. The main result establishes that every local triple derivation on a unital C$^*$-algebra is an associative derivation plus an inner triple derivation (see Theorem \ref{th Kadison thm for triple derivations on unital C*-algebras} and Corollary \ref{c local triple derivation are generalised Jordan derivation}).\smallskip

Although the proofs and results contained in this paper are developed only with techniques of C$^*$-algebra theory,
at some stage we have opted for a more general result in the setting of JB$^*$-triples and to pose Problem \ref{problem local triple derivations} in the more general context.

\section{Local triple derivations on unital C$^*$-algebras}

We recall that a \emph{JB$^*$-triple} is a complex Banach space $E$
equipped with a continuous triple product  $\J ... :
E\times E\times E \to E,$ which is symmetric and linear in the
first and third variables, conjugate linear in the second variable
and satisfies:
\begin{enumerate}[{\rm (a)}] \item The mapping $\delta (a,b) := L(a,b) - L(b,a)$ is a triple derivation on $E$,
where $L(a,b)$ is the operator on $E$ given by $L(a,b) x = \J abx;$
\item $L(a,a)$ is an hermitian operator with non-negative
spectrum; \item $\|L(a,a)\| = \|a\|^2$.\end{enumerate}

Every C$^*$-algebra is a JB$^*$-triple with respect $\J abc = \frac12 ( ab^* c+ cb^*a).$\smallskip

A {triple or ternary derivation}
$\delta$ on $E$ is said to be {\it inner} if it can be written as a finite sum of
derivations of the form $\delta(a,b)$ $(a,b\in E)$.\smallskip

Throughout this section, $A$ will denote a C$^*$-subalgebra of a unital C$^*$-algebra, $B$, containing the unit of $B$. 
The self-adjoint part of a C$^*$-algebra $\mathcal{B}$ will be denoted by $\mathcal{B}_{sa}.$
The C$^*$-algebra $B$ can be regarded as $A$-bimodule with respect to its original product and as a (complex) Jordan Banach triple $A$-module with respect to $\J abc = \frac12\left( ab^* c+ cb^* a\right)$. By an abuse of notation, a linear map $\delta : A \to B$ is called a triple derivation whenever it satisfies identity $(\ref{eq derivation})$ (beware that this is not exactly the definition introduced in \cite{HoPeRu}). A bounded linear operator $T: A\to B$ is a local triple derivation if for each $a$ in $A$ there exists a (linear) triple derivation $\delta_a : A \to B$ satisfying $\delta_a (a) =T(a).$

\begin{lemma}\label{l 1}\cite[Lemma 1]{HoMarPeRu} Let $T : A\to B$ be a local triple derivation. Then $T(1)^* = -T(1)$.
\end{lemma}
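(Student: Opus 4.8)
The plan is to reduce the statement to a property of a single triple derivation and then exploit the fact that the unit of $B$ is a tripotent. By the definition of a local triple derivation applied at the point $a=1$, there exists a (linear) triple derivation $\delta := \delta_1 : A \to B$ with $\delta(1) = T(1)$. It therefore suffices to prove that every triple derivation $\delta : A \to B$ satisfies $\delta(1)^* = -\delta(1)$.

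First I would observe that $1$ is a tripotent for the triple product inherited from $B$; indeed, $\J 111 = \frac12(1\cdot 1^* \cdot 1 + 1\cdot 1^* \cdot 1) = 1$. Evaluating the derivation identity $(\ref{eq derivation})$ at $a=b=c=1$ then gives
\begin{equation*}
\delta(1) = \delta\J 111 = \J{\delta(1)}11 + \J 1{\delta(1)}1 + \J 11{\delta(1)}.
\end{equation*}

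Next I would compute the three summands directly from $\J xyz = \frac12(xy^*z + zy^*x)$. The outer two terms have $1$ in the conjugate-linear middle slot, so $\J{\delta(1)}11 = \J 11{\delta(1)} = \delta(1)$, while the middle term places $\delta(1)$ in the conjugate-linear slot and hence produces the adjoint, $\J 1{\delta(1)}1 = \delta(1)^*$. Substituting these into the displayed identity yields $\delta(1) = 2\,\delta(1) + \delta(1)^*$, and cancelling gives $\delta(1)^* = -\delta(1)$, as required. Applying this to $\delta = \delta_1$ then completes the proof.

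I do not expect any genuine obstacle here: the argument is essentially a one-line evaluation of the derivation identity at the unit. The only point that deserves a moment's care is the appearance of the adjoint, which is forced precisely by the conjugate-linearity of the triple product in its second variable; this is exactly what converts the algebraic derivation identity into the skew-symmetry relation $\delta(1)^* = -\delta(1)$.
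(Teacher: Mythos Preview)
Your proof is correct and follows essentially the same approach as the paper: both pick a triple derivation $\delta_1$ with $\delta_1(1)=T(1)$, evaluate the derivation identity at $a=b=c=1$ using $\J 111 = 1$, and read off $\delta_1(1) = 2\,\delta_1(1) + \delta_1(1)^*$. The only cosmetic difference is that the paper writes $2\J{\delta_1(1)}11 + \J 1{\delta_1(1)}1$ directly, using the symmetry in the outer variables, whereas you write out all three summands separately before combining them.
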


\begin{proof}
Take a triple derivation $\delta_{1} : A \to B$ satisfying $T(1) = \delta_1 (1)$. In this case, $$T(1) = \delta_1 \J 111= 2\J {\delta_1 (1)}11 + \J 1{\delta_1 (1)}1 $$ $$= 2 \delta_1 (1) + {\delta_1 (1)}^* = 2 T(1) + T(1)^*,$$ which implies that $T(1)^* = -T(1)$.
\end{proof}

The above lemma was also established in \cite[Proof of Lemma 1]{HoMarPeRu} and rediscovered in \cite[Lemma 3.1]{Mack}, the proof is included here for completeness reasons.\smallskip

We shall deduce now some consequences of the above Lemma \ref{l 1}. In the setting above, the mapping $\delta(T(1),1) = L(T(1),1) -L(1,T(1)) : A \to B$ is a triple or ternary derivation and $\delta(T(1),1) (1) = \J {T(1)}11- \J 1{T(1)}1 = T(1) - T(1)^* = 2 T(1)$. Thus, \begin{equation}
\label{eq local derivation annihilating zero}\widetilde{T}=T- \frac12 \delta(T(1),1) = T - \delta\left(\frac12 T(1),1\right)
\end{equation} is a local triple derivation and $\widetilde{T} (1) = T(1) - T(1) = 0$.\smallskip

We can exhibit now some examples of generalised derivations which are not local triple derivation.
A basic example is described as follows: let $a$ be an element in a C$^*$-algebra $B$,
the mapping $\hbox{adj}_{a} :B \to B$, $x\mapsto \hbox{adj}_{a} (x) :=a x - x a$, is an example of an associative derivation on $B$.
It is easy to see that the operator $G_{a} :B \to B$, $x\mapsto G_{a} (x):= a x + x a$, is an example of a generalised derivation on $B$.
Since, in the case of $B$ being unital, $G_{a} (1) = 2 a,$ it follows that $G_{a}$ is not a local ternary derivation whenever $a^* \neq -a$.\smallskip

The next lemma is established in the general setting of JB$^*$-triples although we shall 
only require the corresponding version for C$^*$-algebras.
Previously, we recall that elements $a,b$ in a JB$^*$-triple, $E,$ are
said to be \emph{orthogonal} ($a\perp b$ for short) if $L(a,b) =0$.
By Lemma 1 in \cite{BurFerGarMarPe} we know that $$a\perp b \Leftrightarrow \J aab = 0\Leftrightarrow\J bba=0.$$ 
When a C$^*$-algebra $\mathcal{B}$ is regarded as a JB$^*$-triple,
it is known that elements $a,b$ in $\mathcal{B}$ are orthogonal if, and only if, $a b^* = 0= b^* a=0$ (cf. \cite[\S 4]{BurFerGarPe}). 
When $\mathcal{B}$ is commutative, $a\perp b$ if, and only if, $ab=0$.

\begin{lemma}\label{l 2} Let $E$ be a JB$^*$-subtriple of a JB$^*$-triple $F$, where the latter is seen as a Jordan Banach triple $E$-module with respect to its natural triple product. Let $T: E \to F$ be a local triple derivation. Then the products of the form $\J a{T(b)}c$ vanish for every $a,b,c$ in $E$ with $a\perp b\perp c.$
\end{lemma}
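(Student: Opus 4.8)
The plan is to exploit the defining property of a local triple derivation together with the derivation identity $(\ref{eq derivation})$ applied to the (vanishing) product $\J abc$. Fix $a,b,c\in E$ with $a\perp b\perp c$, and use that $T$ is a local triple derivation to choose a triple derivation $\delta_b : E\to F$ with $T(b)=\delta_b(b)$. Since $a\perp b$ means $L(a,b)=0$, we have $\J abc = L(a,b)c = 0$, so applying $\delta_b$ and the derivation rule gives
\begin{equation*}
0 = \delta_b\J abc = \J {\delta_b(a)}bc + \J a{\delta_b(b)}c + \J ab{\delta_b(c)}.
\end{equation*}
The goal is then to show that the two outer terms vanish, leaving $\J a{\delta_b(b)}c = \J a{T(b)}c = 0$.

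For the third term, $\J ab{\delta_b(c)} = L(a,b)\delta_b(c) = 0$, again because $a\perp b$. For the first term, I would use the symmetry of the triple product in its first and third variables to rewrite $\J {\delta_b(a)}bc = \J cb{\delta_b(a)} = L(c,b)\delta_b(a)$. Since orthogonality is symmetric (as recorded above via $a\perp b\Leftrightarrow \J aab=0\Leftrightarrow\J bba=0$), the relation $b\perp c$ yields $c\perp b$ and hence $L(c,b)=0$, so this term also vanishes. Substituting back leaves $\J a{T(b)}c=0$, as desired.

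I do not expect a serious obstacle here: the argument is a short computation whose only delicate point is bookkeeping, namely making sure that each of the two unwanted terms is annihilated by the correct orthogonality relation ($a\perp b$ for $\J abc$ and for $\J ab{\delta_b(c)}$, and $b\perp c$, used through the symmetry of the triple product, for $\J {\delta_b(a)}bc$). The statement is phrased for general JB$^*$-triples, but nothing in the above uses more than the derivation identity, the characterisation of orthogonality through the operators $L(\cdot,\cdot)$, and the symmetry of the triple product, so the proof is uniform across the JB$^*$-triple and C$^*$-algebra settings.
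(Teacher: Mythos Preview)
Your proof is correct and follows exactly the same route as the paper's: pick a triple derivation $\delta_b$ with $\delta_b(b)=T(b)$, expand $\delta_b\J abc$, and use $a\perp b$ and $b\perp c$ to kill the two outer terms. The paper's version is simply more terse, suppressing the term-by-term explanation that you spell out.
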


\begin{proof} Let us take $a,b,c$ in $E$ satisfying $a\perp b\perp c,$ and a triple derivation $\delta_b : E\to F$ such that $\delta_b (b) = T(b)$. The identity $$\J a{T(b)}c = \J a{\delta_{b}(b)}c = \delta_{b} \J abc - \J {\delta_{b}(a)}bc - \J ab{\delta_{b}(c)} = 0,$$ proves the statement.
\end{proof}


It is due to B.E. Johnson that every bounded Jordan derivation from a
C$^*$-algebra $A$ to a Banach $A$-bimodule is an associative
derivation (cf. \cite{John96}). It is also known that every Jordan derivation
from a C$^*$-algebra $A$ to a Banach $A$-bimodule or to a Jordan Banach $A$-module
is continuous (cf. \cite[\S 1]{PeRu}). Therefore, every 
generalised Jordan derivation $D$ from a unital C$^*$-algebra $A$ to a Banach $A$-bimodule
with $D(1)=0$ is a bounded Jordan derivation and hence a
continuous associative derivation.\smallskip

We shall explore now the connections between generalised (Jordan) derivations and triple derivations from $A$ to $B$. Let $\delta: A \to B$ be a triple derivation. By Lemma \ref{l 1}, $\delta(1)^* = -\delta(1),$ and hence $$\delta(a\circ b) = \delta\J a1b = \J {\delta (a)}1b + \J a1{\delta (b)} + \J a{\delta (1)}b $$ $$= \delta(a) \circ b + a\circ \delta (b) + U_{a,b} (\delta(1)^*)= \delta(a) \circ b + a\circ \delta (b) - U_{a,b} (\delta(1)),$$ for every $a,b$ in $A$, which shows that $\delta$ is a generalised Jordan derivation (compare also \cite[Lemma 3.1]{Mack}).\smallskip

We shall focus now our attention on local triple derivations on a commutative unital C$^*$-algebra.

\begin{proposition}
\label{p l 3a} Let us assume that $A$ is commutative. Then every local triple derivation $T : A \to B$ 
is a generalised Jordan derivation.
\end{proposition}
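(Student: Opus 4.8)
The plan is to strip off the part of $T$ coming from $T(1)$, reduce the generalised Jordan derivation identity to a single quadratic identity, and isolate the continuous-spectrum case as the real difficulty. First I would normalize. Since $\delta\!\left(\tfrac12 T(1),1\right)$ is an inner triple derivation it is a generalised Jordan derivation (by the computation preceding Proposition \ref{p l 3a}), and generalised Jordan derivations form a linear space; as $T = \widetilde{T} + \delta\!\left(\tfrac12 T(1),1\right)$ with $\widetilde{T}$ a local triple derivation satisfying $\widetilde{T}(1)=0$ by $(\ref{eq local derivation annihilating zero})$, it suffices to treat $\widetilde{T}$. Writing $T$ again for $\widetilde{T}$, the target identity collapses to the Jordan derivation identity $T(a\circ b)=T(a)\circ b + a\circ T(b)$ (the term $U_{a,b}T(1)$ disappears). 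Using that $A$ is commutative, so $a\circ b = ab$, standard polarization $ab=\tfrac12\big((a+b)^2-a^2-b^2\big)$ together with $\mathbb{C}$-linearity reduces this to the single-variable quadratic identity $T(a^2)=aT(a)+T(a)a$ for $a=a^*$; and decomposing $a=a_+-a_-$ with $a_+\perp a_-$ further reduces it to positive $a$, provided the cross terms between $a_+$ and $a_-$ cancel.

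The cancellation of those cross terms is where Lemma \ref{l 2} enters. For orthogonal self-adjoint $a\perp b$ one has $ab=0$, so the identity on the pair $(a,b)$ amounts to $T(a)b+bT(a)+aT(b)+T(b)a=0$. I would extract this from Lemma \ref{l 2}: for $a\perp b\perp c$ it gives $\J a{T(b)}c=\frac12\big(aT(b)^*c+cT(b)^*a\big)=0$, in particular $aT(b)^*a=0$ taking $c=a$; combining such relations with the generalised Jordan derivation identity satisfied by each local derivation $\delta_a$ (recall $\delta_a(1)^*=-\delta_a(1)$ by Lemma \ref{l 1}) relates $T(a)\circ b$ to $a\circ\delta_a(b)$ on orthogonal pairs. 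The delicate point here is the bookkeeping of the adjoints produced by the triple product, and this is exactly where $T(1)=0$ and the self-adjointness of $a,b$ are used to turn the $\ast$'s into the adjoint-free Jordan expression.

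The genuine obstacle is the diagonal, namely the quadratic identity $T(a^2)=aT(a)+T(a)a$ for a single positive $a$ whose spectrum need not be finite (for instance $A=C[0,1]$ has no nontrivial projections, so the locally constant elements are not dense). No single local derivation settles this directly, since $\delta_a$ agrees with $T$ only on the line $\mathbb{C}a$, and $a^2\in\mathbb{C}a\oplus\mathbb{C}1$ already forces $a$ to have at most two-point spectrum. My plan to get around this is an approximation through the continuous functional calculus: for $0\le a\le 1$ and a fine partition of $[0,\|a\|]$, choose a tent partition of unity $1=\sum_k g_k$ with $g_kg_{k'}=0$ whenever $|k-k'|\ge 2$, so that $a=\sum_k g_k(a)$ and $g_k(a)\perp g_{k'}(a)$ for $|k-k'|\ge 2$. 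The orthogonal off-diagonal contributions to $T(a^2)$ vanish by the previous step, while each near-diagonal piece $g_k(a)$ is concentrated on a tiny spectral window, so $g_k(a)^2\approx c_k\,g_k(a)$ for a scalar $c_k$; thus $g_k(a)^2$ lies essentially on the line $\mathbb{C}g_k(a)$, where $\delta_{g_k(a)}$ coincides with $T$ and the honest triple derivation identity delivers the quadratic relation up to a controlled error. Refining the partition and using the boundedness of $T$, I would pass to the limit.

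The hard part is precisely making these estimates uniform and controlling the $\delta_{g_k(a)}(1)$ terms through the limit. A conceptually cleaner but technically heavier alternative would be to pass to the enveloping von Neumann algebra $A^{**}$, where spectral projections abound and the quadratic identity is immediate on the weak$^*$-dense simple elements; there the obstacle migrates to showing that the bidual extension $T^{**}$ is again a local triple derivation, which I expect to be the crux of that route.
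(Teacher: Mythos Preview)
Your reduction to the quadratic identity $T(a^2)=2\,a\circ T(a)$ for a single positive element with continuous spectrum is correct in spirit, but the argument you sketch there does not close, and this is precisely the heart of the proposition. The tent-partition plan needs, for each piece $g_k(a)$, a triple derivation $\delta_{g_k(a)}$ that matches $T$ only at the point $g_k(a)$; nothing bounds $\|\delta_{g_k(a)}\|$ or $\|\delta_{g_k(a)}(1)\|$, so the error terms you yourself flag cannot be made uniform as the mesh shrinks. Even the preliminary cross-term step is not settled: Lemma~\ref{l 2} gives $\{a,T(b),c\}=\tfrac12\big(aT(b)^*c+cT(b)^*a\big)=0$ for $a\perp b\perp c$, which controls expressions of the form $aT(b)^*a$, not the Jordan products $a\circ T(b)$ that you need for the cancellation $a_+\circ T(a_-)+a_-\circ T(a_+)=0$. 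Your bidual alternative runs into the obstacle you name: there is no reason for $T^{**}$ to be a local triple derivation.

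The paper bypasses all approximation by a structural tool you are missing: Goldstein's theorem on orthogonal forms. One fixes $\varphi\in B^*$, sets $W_\varphi(a,b,c)=\varphi\big(\{a,T(b),c\}\big)$, and observes from Lemma~\ref{l 2} that $W_\varphi$ vanishes whenever $ab=bc=0$. Freezing $a,b$ with $ab=0$, the form $(s,t)\mapsto W_\varphi(a,bs,t)$ is then an orthogonal sesquilinear form on $A$, so by Goldstein's theorem it equals $\phi(s^*t)$ for some $\phi\in A^*$; this yields the exact identity $W_\varphi(a,bs,t)=W_\varphi(a,b,s^*t)$. A second application of Goldstein in the remaining pair of variables produces, after unwinding and letting $\varphi$ range over $B^*$, a global exact identity for $\{x,T(ys),t\}$ from which the generalised Jordan derivation law follows by specialising $x=t=1$. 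No limits, no control on the $\delta_a$'s, and no projections are required.
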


\begin{proof}
Let us fix an arbitrary $\varphi \in B^*$ and define $W_{\varphi}: A\times A\times A \to \mathbb{C}$ a mapping given by $W_{\varphi} (a,b,c) := \varphi \left(\J {a}{T(b)}{c}\right)$. Clearly, $W_{\varphi}$ is linear and symmetric in $a$ and $c$ and conjugate linear in $b$. Lemma \ref{l 2} assures that \begin{equation}\label{eq p 1 3a 0} W_{\varphi} (a,b,c) ={\varphi} \J a{T(b)}c = \frac12 {\varphi} \left(a T(b)^* c + c T(b)^* a \right)=0,
\end{equation} whenever $a \perp b \perp c$ (or equivalently, $a b = b c= 0$). Fix $a,b\in A$ with $a b=0$. The form $V(s,t) := W_{\varphi}(a,bs,t)$ is linear in $t$ and conjugate linear in $s$ and $V(s,t) = 0$ for every $s,t\in A$ with $s t=0$. That is, $V$ an orthogonal form in the terminology of Goldstein in \cite{Gold}. It follows from \cite[Theorem 1.10]{Gold} (see also \cite{HaaLaust} or \cite{PaPeVi}) that there exists $\phi\in A^*$ satisfying \begin{equation}
\label{eq p 1 3a 1} V(s,t) = \phi (s^* t), \ \forall s,t\in A.
\end{equation} It follows from $(\ref{eq p 1 3a 1})$ that \begin{equation}
\label{eq p 1 3a 2} W_{\varphi}(a,b s,t)= V(s,t)= V(1,s^* t)= W_{\varphi}(a,b,s^* t)\end{equation}
for every $s,t,a,b\in A$ with $a b=0$. Fix $s,t\in A$, the above equation $(\ref{eq p 1 3a 2})$
shows that the form $V_2 (x,y) :=  W_{\varphi}(x,y s,t) - W_{\varphi}(x,y, s^* t)$ is orthogonal. Again Goldstein's theorem shows the existence of $\phi_1\in A^*$ satisfying $V_2 (x,y) = \phi_1 (x y^*)$, for every $x,y\in A$. Consequently, $V_2 (x,y) = V_2 (1, x^* y ) = V_2 (x y^*,1)$, for all $x,y\in A$. We have therefore proved that
$$ W_{\varphi}(x,y s,t) - W_{\varphi}(x,y, s^* t)  = W_{\varphi}(x y^*, s,t) - W_{\varphi}(x y^*,1, s^* t),$$ or equivalently,
$$ \varphi \left(\J {x}{T(ys)}{t} - \J {x}{T(y)}{s^* t} - \J {x y^*}{T(s)}{t} + \J {x y^*}{T(1)}{s^* t} \right)=0,$$ for every $x,y,s,t\in A$, $\varphi\in B^*$.
The arbitrariness of $\varphi$ and the Hahn-Banach theorem give \begin{equation}
\label{eq p 1 3a 3} \J {x}{T(ys)}{t} = \J {x}{T(y)}{s^* t} + \J {x y^*}{T(s)}{t} - \J {x y^*}{T(1)}{s^* t}.\end{equation} Finally, taking $x=t=1$, we have  $$ {T(ys)}^* = {T(y)}^* \circ {s^*} + y^* \circ {T(s)}^* - U_{y^*, s^*}\left({T(1)^*}\right),$$ which shows that $T$ is a generalised Jordan derivation.
\end{proof}

Let us make some observations to the proof of the above proposition. The identity $(\ref{eq p 1 3a 3})$ holds for every $x,y,s,t$ in $A$. Moreover, since, by Goldstine's theorem, the unit ball of $A$ is weak$^*$-dense in the unit ball of $A^{**}$, by Sakai's theorem, the products of $A^{**}$ and of $B^{**}$ are separately weak$^*$-continuous, and $T^{**}$ is weak$^*$-continuous, the equality \begin{equation}
\label{eq p 1 3a 4} \J {x}{T^{**}(ys)}{t} = \J {x}{T^{**}(y)}{s^* t} + \J {x y^*}{T^{**}(s)}{t} - \J {x y^*}{T(1)}{s^* t}.\end{equation} holds for every $x,y,s,t$ in $A^{**}$, and hence $T^{**}$ also is a generalised Jordan derivation.\smallskip

We can prove now a stronger version of Proposition \ref{p l 3a} which is a subtle variant of \cite[Sublemma 5]{Kad90} and \cite[Proposition 1.1]{LiPan}.

\begin{proposition}
\label{p l 3} In the hypothesis of Proposition \ref{p l 3a}, let $T : A \to B$ be a local triple derivation.
Then for each $a,b,c\in A$ with $ab= bc=0$ we have $$a T(b)^* c =a T(b^*)^* c =0.$$
\end{proposition}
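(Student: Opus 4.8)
The plan is to isolate the \emph{asymmetric} product $a\,T(b)^*c$. The natural first attempt -- applying Lemma \ref{l 2}, or equivalently specialising $(\ref{eq p 1 3a 3})$ at $x=a,\ t=c$ -- only produces the symmetric combination $\J{a}{T(b)}{c}=\frac12\left(a\,T(b)^*c+c\,T(b)^*a\right)=0$, because the triple product is symmetric in its two outer entries. Every relation obtainable inside the triple-product framework inherits this symmetry, so it can at best give $a\,T(b)^*c+c\,T(b)^*a=0$, never the vanishing of each summand. Breaking this symmetry is the crux of the argument, and I would do it by leaving the triple product aside and working with the genuine associative products of $B$.

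Concretely, I would start from the generalised Jordan derivation identity obtained at the end of the proof of Proposition \ref{p l 3a} (equation $(\ref{eq p 1 3a 3})$ with $x=t=1$),
\begin{equation*}
T(uv)^* = T(u)^*\circ v^* + u^*\circ T(v)^* - U_{u^*,v^*}\!\left(T(1)^*\right),
\end{equation*}
which is an honest equality in $B$, and specialise it to $u=b$, $v=e_n$, where $(e_n)$ is a sequence in $A$ with $0\le e_n$, $\|b-b\,e_n\|\to 0$, and $a\,e_n=e_n\,c=0$. Such a sequence exists (take $e_n=f_n(|b|)$ for suitable real $f_n$ with $f_n(0)=0$) precisely because $a\perp b\perp c$ forces $a$ and $c$ to be orthogonal to the whole C$^*$-subalgebra of $A$ generated by $b$, to which each $e_n$ belongs. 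The decisive move is then to multiply this identity \emph{on the left by $a$ and on the right by $c$}: this converts the Jordan expressions into ordinary products in $B$, where left and right multiplication are no longer interchangeable, and it is exactly here that the symmetry is broken.

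I then expect the whole right-hand side to collapse. Every term in which $b^*$ stands adjacent to $a$ or to $c$ vanishes because $a\,b^*=b^*c=0$ (recall $ab=bc=0$ and $A$ is commutative), which disposes of the $T(e_n)$ and $T(1)$ contributions; the two surviving terms, $a\,T(b)^* e_n c$ and $a\,e_n\,T(b)^* c$, vanish because $e_n c = a\,e_n = 0$. This leaves $a\,T(b\,e_n)^*c = 0$ for every $n$, and letting $n\to\infty$ and invoking the boundedness of $T$ (so that $T(b\,e_n)\to T(b)$) yields $a\,T(b)^*c=0$. The second identity $a\,T(b^*)^*c=0$ follows by repeating the argument verbatim with $b$ replaced by $b^*$, which is legitimate since $b^*$ satisfies the same orthogonality relations $a\,b^*=b^*c=0$ and admits the same approximate unit $(e_n)$. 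The main obstacle is thus conceptual rather than computational: one must recognise that the symmetric triple product cannot detect $a\,T(b)^*c$ alone, and pass to the associative identity; the one genuine technical subtlety is that a single exact local unit $e$ with $eb=b$ and $ea=ec=0$ may fail to exist when the supports of $a,b,c$ touch, which is why I would work with an approximate unit and the continuity of $T$ rather than an exact one.
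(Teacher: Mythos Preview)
Your argument is correct. The approximate-unit approach works: with $e_n=f_n(|b|)$ for $f_n\in C_0(0,\infty)$ increasing pointwise to $1$, each $e_n$ lies in the C$^*$-subalgebra of $A$ generated by $b$, hence $ae_n=e_nc=0$; and since $A$ is commutative, $ab=0$ forces $ab^*=0$ and similarly $b^*c=0$. Expanding the generalised Jordan identity for $T(be_n)^*$ and sandwiching between $a$ and $c$ annihilates all six terms exactly as you describe, so $aT(be_n)^*c=0$, and continuity of $T$ gives the limit. The second identity follows by the same computation with $b^*$ in place of $b$.

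The paper takes a different route. Rather than approximating inside $A$, it passes to the bidual: if $p\in A^{**}$ is the range projection of $b$ (so $bp=b$, $ap=pc=0$), then the weak$^*$-extended identity $(\ref{eq p 1 3a 4})$ with $x=t=1-p$, $y=b$, $s=p$ gives $(1-p)\,T(b)^*(1-p)=\{1-p,T(bp),1-p\}=0$ at once, and multiplying by $a$ and $c$ yields the result. So the paper does \emph{not} leave the triple-product framework: by taking both outer entries equal to $1-p$ it obtains the asymmetric expression $(1-p)T(b)^*(1-p)$ directly, and the ``symmetry obstruction'' you describe is circumvented rather than broken. Your approach trades the bidual machinery and the weak$^*$-continuity arguments for an elementary limit inside $A$, at the cost of handling an approximate unit instead of an exact projection; the paper's version is a one-line computation once $(\ref{eq p 1 3a 4})$ is in hand, while yours is self-contained and avoids the second-dual entirely.
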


\begin{proof} Fix $a,b,c\in A$ with $ab= bc=0$. Let us identify $A$ with some $C(K)$ for a suitable compact Hausdorff space $K$. Let $p$ denote the range projection of $b$ in $A^{**}$, that is $p = \chi_{_{S(b)}}$, where $S (b) :=\{ t\in K : b(t) \neq 0\}$ is the co-zero set of $b$. Observe that $a p=0=p c$ and $p b = b p =b$.\smallskip

By $(\ref{eq p 1 3a 4})$, we have $${(1-p)}{T(b)^*}{(1-p)} = \J {1-p}{T(b)}{1-p}  $$ $$= \J {1-p}{T(b p)}{1-p}= \J {1-p}{T(b)}{p(1-p)} + \J {(1-p) b^*}{T^{**}(p)}{1-p}  $$ $$ - \J {(1-p) b^*}{T(1)}{p (1-p)}=0.$$ Therefore, $a T(b)^* c = a (1-p) T(b)^* (1-p) c = 0.$
\end{proof}

One of the main results established by J. Li and Zh. Pan in \cite[Corollary 2.9]{LiPan} implies that a bounded linear operator $T : A \to B$ is a generalised derivation if, and only if, $a T(b) c= 0$, whenever $a b = b c=0$. Let us suppose that, in the above hypothesis, $A$ is commutative and $T : A \to B$ is a local triple derivation. Proposition \ref{p l 3} assures that $a T(b^*)^* c =0$, for every $a b= bc =0$ in $A$, and consequently, the mapping $x\mapsto T(x^*)^*$ is a generalised derivation, and thus, $$T(a^* b^*)^* = T(a^*)^* b + a T(b^*)^* - a T(1)^* b ,$$ or equivalently, $$ T(b a) =T(a b) = b T(a) + T(b) a - b T(1) a,$$ showing that $T$ is actually a generalised derivation. We have therefore proved the following:

\begin{corollary}\label{c local triple derivation are generalised Jordan derivation} Let us assume that $A$ is commutative. 
Then every local triple derivation from $A$ to $B$ is a generalised derivation. Moreover, taking $\widetilde{T}=T- \frac12 \delta(T(1),1) = T - \delta\left(\frac12 T(1),1\right),$ it follows that $\widetilde{T}$ is a local triple derivation with $\widetilde{T} (1) =0$, and hence $\widetilde{T}$ is a (Jordan) derivation.$\hfill\Box$
\end{corollary}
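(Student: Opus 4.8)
The plan is to derive the first assertion from the Li--Pan characterisation of generalised derivations together with Proposition~\ref{p l 3}, and then to obtain the second assertion by feeding the normalised operator $\widetilde T$ back into the first.

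First I would observe that, since the involution is a conjugate-linear isometry of $B$, the map $S : A \to B$, $S(x) := T(x^*)^*$, is a bounded linear operator. By \cite[Corollary 2.9]{LiPan}, a bounded linear operator $R : A \to B$ is a generalised derivation if and only if $a\,R(b)\,c = 0$ whenever $ab = bc = 0$. Now, for $a,b,c\in A$ with $ab = bc = 0$, Proposition~\ref{p l 3} gives $a\, T(b^*)^*\, c = 0$, that is, $a\, S(b)\, c = 0$. Hence $S$ is a generalised derivation, so $S(xy) = S(x)\,y + x\,S(y) - x\,S(1)\,y$ for all $x,y\in A$.

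Next I would transfer this identity to $T$. Using that $A$ is commutative, so that $(xy)^* = x^* y^*$ and $1^*=1$, the identity for $S$ reads $T(x^* y^*)^* = T(x^*)^* y + x\, T(y^*)^* - x\, T(1)^* y$. Replacing $x,y$ by $x^*,y^*$ and taking adjoints yields $T(xy) = y\,T(x) + T(y)\,x - y\,T(1)\,x$. Finally, interchanging $x$ and $y$ (legitimate since $T(xy) = T(yx)$ by commutativity) delivers the standard identity $T(xy) = T(x)\,y + x\,T(y) - x\,T(1)\,y$, proving that $T$ is a generalised derivation. The one point that needs care is precisely this adjoint-and-commutativity bookkeeping: one must check that the ``flipped'' operator $S$ being a generalised derivation unwinds into the untwisted identity for $T$, and the commutativity of $A$ is exactly what makes this work.

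For the second assertion I would use the reduction recorded in $(\ref{eq local derivation annihilating zero})$. The operator $\tfrac12\,\delta(T(1),1) = \delta\bigl(\tfrac12 T(1),1\bigr)$ is an inner triple derivation, so for each $a\in A$, writing $T(a) = \delta_a(a)$ with $\delta_a$ a triple derivation, we have $\widetilde T(a) = \bigl(\delta_a - \tfrac12\,\delta(T(1),1)\bigr)(a)$, the value at $a$ of the triple derivation $\delta_a - \tfrac12\,\delta(T(1),1)$; hence $\widetilde T$ is again a local triple derivation, and $\widetilde T(1) = T(1) - T(1) = 0$. Applying the first assertion to $\widetilde T$ shows that $\widetilde T$ is a generalised derivation with $\widetilde T(1) = 0$. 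Since a generalised derivation vanishing at the unit is immediately an associative derivation (the correction term $a\,\widetilde T(1)\,b$ disappears), as already observed before Proposition~\ref{p l 3a} via Johnson's theorem, it follows that $\widetilde T$ is a (Jordan) derivation, completing the proof.
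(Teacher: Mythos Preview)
Your proof is correct and follows essentially the same route as the paper: apply Proposition~\ref{p l 3} together with the Li--Pan criterion to see that $x\mapsto T(x^*)^*$ is a generalised derivation, unwind via commutativity to obtain the identity for $T$, and then feed $\widetilde T$ (from $(\ref{eq local derivation annihilating zero})$) back through the first part. The only cosmetic point is that once $\widetilde T$ is a generalised derivation with $\widetilde T(1)=0$ it is an associative derivation directly from the definition, so the appeal to Johnson's theorem is unnecessary here (the paper invokes Johnson only as an alternative route).
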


The statement concerning $\widetilde{T}$ in the above corollary could be also derived applying the
previously mentioned Johnson's theorem on the equivalence of Jordan derivations and (associative) derivations
(cf. \cite[Theorem 6.3]{John96}).\smallskip

\begin{remark}
\label{r GJder are Gder}{\rm  The argument given in the proof of Proposition \ref{p l 3}
is also valid to show that, under the same hypothesis, a generalised Jordan derivation $T: A \to B$
satisfies that $a T(b) c =0,$ for every $a,b,c\in A$ with $ab= bc=0$. Combining Goldstine's theorem 
with the separate weak$^*$-continuity of the product of $A^{**}$ and $B^{**}$ we guarantee 
that $T^{**}$ is a generalised Jordan derivation too. Let $p$ denote the range projection of $b$ in $A^{**}$.
In this case $$T(b)= T(p\circ b) = T(p) \circ b + p \circ T(b) - U_{p,b} T(1)$$ $$=\frac12 \Big(b T(p) + T(p) b + p T(b) + T(b) p - p T(1) b - b T(1)p \Big),$$ which implies that $(1-p) T(b) (1-p)=0,$ and hence $a T(b) c =0,$ for every $a,b,c\in A$ with $ab= bc=0$.
By \cite[Corollary 2.9]{LiPan}, $T$ is a generalised derivation. This shows that every generalised Jordan derivation
on a unital C$^*$-algebra is a generalised derivation. }
\end{remark}

Associative derivations from $A$ to $B$ are not far away from triple derivation. It is not hard to check that, in our setting, a bounded linear operator $\delta : A \to B$ is a triple derivation and $\delta(1)=0$ if, and only if, it is a $^*$-derivation, that is, it is a derivation and $\delta(a^*)= \delta(a)^*$.\smallskip

\begin{lemma}\label{l local triple derivation is symmetric} Let $B$ be a unital C$^*$-algebra,
and let $T : B\to B$ be a bounded local triple derivation with $T(1) = 0$.
Then $T$ is a symmetric operator, that is, $T(a^*) = T(a)^*$, for every $a\in B$.
\end{lemma}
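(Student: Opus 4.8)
The plan is to reduce the symmetry statement $T(a^*) = T(a)^*$ to an identity I already control through the local-triple-derivation machinery, and then to exploit the fact that a C$^*$-algebra is linearly spanned by its unitaries (or, equivalently, by self-adjoint elements, via $a = h + ik$). First I would try to mimic the commutative case: the engine in Proposition \ref{p l 3a} was to fix a functional $\varphi \in B^*$, form the trilinear object $W_\varphi(a,b,c) := \varphi(\J a{T(b)}c)$, observe it is an orthogonal form in suitable variables, and apply Goldstein's theorem to linearise it. In the noncommutative setting orthogonality means $ab^*=0=b^*a$ rather than $ab=0$, so the bookkeeping changes, but Lemma \ref{l 2} still guarantees $\J a{T(b)}c = 0$ whenever $a\perp b\perp c$, which is the input Goldstein's theorem needs.

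The cleanest route, though, is probably to isolate the specific consequence I want. Unwinding $\J a{T(b)}c = \tfrac12(a\,T(b)^* c + c\,T(b)^* a)$ shows that symmetry of $T$ is governed by how $T(b)^*$ sits between orthogonal ``walls'' $a,c$, i.e. by the map $b \mapsto T(b)^*$ versus $b\mapsto T(b^*)$. So the key step is to establish, for self-adjoint $b$, that $T(b)$ is again self-adjoint; the general case then follows by writing $a = h + ik$ and using linearity together with $T(1)=0$. To handle a self-adjoint $b$, I would pick a triple derivation $\delta_b$ with $\delta_b(b) = T(b)$ and compute $\J b{T(b)}{\,\cdot\,}$-type products, using the derivation identity $(\ref{eq derivation})$ to push $\delta_b$ across the triple product $\J bb{b}$ or $\J 1b1$. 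Crucially, $\J 1b1 = \tfrac12(b^* + b^*) = b^*$ in a C$^*$-algebra, so applying the triple-derivation identity to $\delta_b\J 1b1$ relates $T(b)$, $T(b)^*$, and $T(1)=0$; this is exactly the mechanism by which Lemma \ref{l 1} forced $T(1)^* = -T(1)$, specialised here to the $T(1)=0$ regime.

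Concretely, for $b$ self-adjoint I would write $b = \J 1b1$ and compute
\[
T(b) = \delta_b\J 1b1 = \J {\delta_b(1)}b1 + \J 1{\delta_b(b)}1 + \J 1b{\delta_b(1)}.
\]
The middle term is $\J 1{T(b)}1 = T(b)^*$. The outer two terms involve $\delta_b(1)$, which by Lemma \ref{l 1} applied to $\delta_b$ (or directly, since $\delta_b(1)^* = -\delta_b(1)$) is skew-adjoint; taking adjoints and comparing should force the skew part to drop out and yield $T(b) = T(b)^*$, using $b^* = b$. For a general $a$, decompose $a = \tfrac12(a + a^*) + \tfrac{i}{2}\cdot\tfrac{1}{i}(a - a^*)$ into self-adjoint components, apply the self-adjoint case to each, and recombine by conjugate/complex linearity of $T$ to obtain $T(a^*) = T(a)^*$.

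The main obstacle I anticipate is the noncommutativity entering the Goldstein-form step, should the direct $\J 1b1$ computation leave residual cross-terms: in the commutative proof the passage from the orthogonal form to the generalised-derivation identity $(\ref{eq p 1 3a 3})$ relied on $ab=0$ characterising orthogonality, whereas here $a\perp b \Leftrightarrow ab^* = b^*a = 0$, so the linearised identity I extract may mix $b$ and $b^*$ in a way that does not immediately collapse. If the slick $\J 1b1$ argument does not close, I would fall back on running the full orthogonal-form argument of Proposition \ref{p l 3a} in $B$, extract the analogue of $(\ref{eq p 1 3a 3})$, and then specialise variables (as in taking $x = t = 1$ there) to read off $T(b)^* = T(b^*)$ directly. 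The delicate point will be ensuring that the orthogonality relation and the conjugate-linear slot of $W_\varphi$ are tracked correctly so that the self-adjoint conclusion, rather than a skew-adjoint one, emerges.
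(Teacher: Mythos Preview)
Your direct $\J 1b1$ computation does not close. Carrying it out for $b=b^*$ gives
\[
T(b)=\delta_b\J 1b1=\J{\delta_b(1)}b1+\J1{T(b)}1+\J1b{\delta_b(1)}=2\bigl(\delta_b(1)\circ b\bigr)+T(b)^*,
\]
so the skew part of $T(b)$ equals $\delta_b(1)\circ b$. Taking adjoints merely reproduces this same identity (both sides are skew-adjoint, since $\delta_b(1)^*=-\delta_b(1)$ and $b^*=b$), so nothing drops out. The real obstacle is that the local hypothesis only pins down $\delta_b$ at the point $b$: you know $\delta_b(b)=T(b)$, but $\delta_b(1)$ is completely uncontrolled and has no reason to vanish, even though $T(1)=0$. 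Your fallback of rerunning the Goldstein argument directly in noncommutative $B$ is left as a hope rather than a plan, and you yourself flag the orthogonality bookkeeping as the likely failure point.

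The paper avoids this by choosing a triple product that only evaluates $\delta$ at the target element. For a unitary $u$ one has $\J uuu=u$, and expanding $\delta_u\J uuu$ yields $T(u)=-u\,T(u)^*\,u$, with no stray $\delta_u(1)$ term. A second equation, $T(u)=-u\,T(u^*)\,u$, comes from the already-proved commutative case: restricting $T$ to the abelian C$^*$-subalgebra generated by a normal element and $1$, Corollary~\ref{c local triple derivation are generalised Jordan derivation} makes $T|_A$ an associative derivation, so $0=T(uu^*)=T(u)u^*+uT(u^*)$. Comparing the two equations gives $T(u^*)=T(u)^*$ for unitaries in $A$, and Russo--Dye plus the arbitrariness of the normal generator finish the job. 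The moral: work with $\J uuu$ rather than $\J 1b1$, and lean on the commutative machinery you have already built to supply the companion identity.
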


\begin{proof} Let $A$ denote the abelian C$^*$-subalgebra generated by a normal element $a$ and the unit of $B$. Since $T|_{A} : A\to B$ is a local triple derivation and $T(1)=0$, by Corollary \ref{c local triple derivation are generalised Jordan derivation} and the subsequent comments, $T|_{A}$ is an associative derivation. Let $u$ be a unitary element in $A$. Since $T|_{A}$ is a derivation, we have $0=T(1)=T(u u^*)=u T(u^{*})+T(u) u^{*},$ so $$T(u)=-u T(u^{*}) u.$$

Now, having in mind that $T$ is a local triple derivation, there exists a triple derivation $\delta_u$ such that $T(u)=\delta_u (u)$,
we deduce that $T(u)=\delta_u (u) = \delta_{u} (u u^{*} u)= \delta_{u} \{ u,u,u \}=2 \{u,u, T (u) \}+ \{u,T(u),u \}= 2 T(u)+ u T(u)^* u,$
which gives $$T(u)= - u  T(u)^* u.$$ Combining these two equations we have $u T(u^*) u = u T(u)^* u,$ and hence $T(u^*)=T(u)^*$.\smallskip

Finally, by the Russo-Dye theorem, $T(b^*) = T(b)^*$, for every $b$ in $A$.
The arbitrariness of the normal element $a$ implies that $T(b)^* = T(b)$, for every $b\in B_{sa}$,
which gives the statement of the lemma.
\end{proof}

We can state now the main result.

\begin{theorem}\label{th Kadison thm for triple derivations on unital C*-algebras}
Let $B$ be a unital C$^*$-algebra. Every local triple derivation from $B$ to $B$ is a triple derivation.
\end{theorem}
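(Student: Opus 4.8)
The plan is to reduce first to the case $T(1)=0$ and then to show that such a $T$ is simultaneously symmetric and an associative derivation, after which the ternary identity $(\ref{eq derivation})$ follows at once. To set up the reduction I would use $(\ref{eq local derivation annihilating zero})$: the operator $\widetilde{T}=T-\frac12\delta(T(1),1)$ is a local triple derivation with $\widetilde{T}(1)=0$, and since $\frac12\delta(T(1),1)$ is an inner triple derivation, it is enough to prove that $\widetilde{T}$ is a triple derivation. I therefore assume from now on that $T(1)=0$, in which case Lemma \ref{l local triple derivation is symmetric} already supplies the symmetry $T(a^*)=T(a)^*$ for every $a\in B$.

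The core of the argument is to globalise the commutative results of Section 2. Fix a self-adjoint $a\in B$ and let $A$ be the abelian C$^*$-subalgebra generated by $a$ and $1$. Since $A$ is closed under the triple product of $B$ and every triple derivation on $B$ restricts to a triple derivation $A\to B$, the restriction $T|_{A}:A\to B$ is a bounded local triple derivation with $T|_{A}(1)=T(1)=0$. As $A$ is commutative, Corollary \ref{c local triple derivation are generalised Jordan derivation} shows that $T|_{A}$ is a generalised derivation, and the vanishing $T|_{A}(1)=0$ upgrades this to an associative derivation. Evaluating that derivation identity at $x=y=a$ gives $T(a^2)=T(a)a+aT(a)$ for every self-adjoint $a$.

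I would then promote this to the Jordan identity on all of $B$. Replacing $a$ by $a+b$ with $a,b\in B_{sa}$ and cancelling the contributions of $a^2$ and $b^2$ yields $T(a\circ b)=T(a)\circ b+a\circ T(b)$ for self-adjoint $a,b$; as both sides are complex bilinear in $(a,b)$ and $B_{sa}$ spans $B$ over $\CC$, this identity extends to all $a,b\in B$, so $T$ is a bounded Jordan derivation. By Johnson's theorem (\cite{John96}) $T$ is an associative derivation, and being also symmetric with $T(1)=0$ it is a $^*$-derivation; by the equivalence recorded just before Lemma \ref{l local triple derivation is symmetric}, a bounded operator annihilating the unit is a triple derivation exactly when it is a $^*$-derivation. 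Hence $\widetilde{T}$, and therefore the original $T$, is a triple derivation.

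The step I expect to be the main obstacle is precisely this globalisation: the restriction-to-$C^*(a,1)$ device only controls $T$ on squares of self-adjoint elements, so one must check that the polarisation genuinely reconstructs the full Jordan identity and that the passage to a complex-bilinear identity on all of $B$ is legitimate. The two deeper inputs that make the bridge work are the automatic symmetry furnished by Lemma \ref{l local triple derivation is symmetric} (available only because $T(1)=0$) and Johnson's theorem identifying bounded Jordan and associative derivations on C$^*$-algebras; together they convert the Jordan-level conclusion obtained from the commutative case into the genuine ternary derivation property.
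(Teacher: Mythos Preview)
Your proof is correct and follows essentially the same strategy as the paper's: reduce to $\widetilde{T}(1)=0$, restrict to the abelian C$^*$-subalgebra generated by a single self-adjoint element (the paper states it for a normal element but only uses self-adjoints in the polarisation), apply Corollary \ref{c local triple derivation are generalised Jordan derivation} to obtain the derivation identity on squares, polarise to a Jordan derivation on $B_{sa}$ and extend by bilinearity, invoke Johnson's theorem \cite{John96}, and finish with Lemma \ref{l local triple derivation is symmetric}. The only differences are cosmetic---the order in which you perform the reduction to $T(1)=0$ and invoke the symmetry lemma.
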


\begin{proof} 
Let $A$ denote the abelian C$^*$-subalgebra generated by a normal element $a$ and the unit of $B$. Since $T|_{A} : A\to B$ is a local triple derivation, we can apply Corollary \ref{c local triple derivation are generalised Jordan derivation} and the comments following it to assure that $T|_{A}$ is a triple derivation and $\widetilde{T}|_{A}=\left(T- \frac12 \delta(T(1),1)\right)|_{A}$ is an associative derivation. It follows that $\widetilde{T} (a^2) = \widetilde{T} (a)  a + a \widetilde{T} (a)$. Since $a$ was arbitrarily chosen, we can affirm that $$\widetilde{T} ((a+b)^2) = \widetilde{T} (a+b)  (a+b) + (a+b) \widetilde{T} (a+b),$$ for every $a,b\in B_{sa}$, which implies that $$\widetilde{T} (a\circ b) = \widetilde{T} (a) \circ b + a \circ \widetilde{T} (b),$$ for every $a,b\in A_{sa}$. It is easy to check that $\widetilde{T}$ is a Jordan derivation, and hence an associative derivation by \cite[Theorem 6.3]{John96}. Now, Lemma \ref{l local triple derivation is symmetric} assures that $\widetilde{T}$ is a symmetric operator
and thus a triple derivation, which concludes the proof.
\end{proof}

We shall conclude this section with a result on ``automatic continuity'' for generalised derivations.
The following construction is inspired by the results in \cite[\S 4]{GarPe} (see also \cite{PeRu}). 
Let $D: B\to X$ be a generalised Jordan derivations from a unital C$^*$-algebra to a Banach $B$-module.
We regard $X$ as a Jordan Banach triple $B$-module with the triple products defined by $\J xba= \J abx := (a\circ b) \circ x
(x\circ b) \circ a- (a\circ x) \circ b$, and $\J axb := (a\circ x) \circ b
(x\circ b) \circ a- (a\circ b) \circ x,$ where for each $a\in B$ and $x\in X$,  $a\circ x := \frac12 (a x + x a)$.  
 
Fix $a,b,c\in B_{sa}$. The identity 
$$ D(\{a,b,c\}) - \{ D(a),b ,c\} -\{a ,D(b) ,c\}-\{a ,b ,D(c)\} =$$
$$= - U_{a,b}(D(1))\circ c - U_{a\circ b, c}(D(1)) - U_{c,b}(D(1))\circ a - U_{c\circ b, a}(D(1)) $$
$$+  U_{a,c}(D(1))\circ b + U_{c\circ a, b}(D(1)),$$ shows that the mapping 
$\check{D}|_{B_{sa}^{3}} : B_{sa}\times B_{sa}\times B_{sa} \to X$, 
$\check{D} (a,b,c) := D (\J abc) - \J {D(a)}bc -\J a{D(b)}c - \J ab{D(c)}$ is a continuous trilinear operator 
and hence $D$ is a ``\emph{generalised triple derivation}'' in the terminology employed in \cite[\S 4]{GarPe}.
It follows from \cite[Proposition 21]{GarPe} (see also \cite[Theorem 22]{GarPe}) that 
$D|_{B_{sa}}$ is continuous. The continuity of $D$ follows straightforwardly.\smallskip

\begin{proposition}\label{p automatic continuity of generalised derivations} Every generalised (Jordan) derivation, 
not assumed a priori to be continuous, from a unital C$^*$-algebra $B$ into a Banach $B$-bimodule is continuous.$\hfill\Box$
\end{proposition}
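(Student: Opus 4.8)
The plan is to derive continuity from the automatic-continuity theory for \emph{generalised triple derivations} developed in \cite[\S 4]{GarPe}, after recasting $D$ into that framework. First I would note that, by Remark \ref{r GJder are Gder}, the two notions in the statement coincide on a unital C$^*$-algebra: symmetrising the Leibniz rule shows every generalised derivation is a generalised Jordan derivation, and the Remark gives the converse. Hence it suffices to prove that an arbitrary generalised Jordan derivation $D : B \to X$, not assumed continuous, is continuous. Regarding $X$ as the Jordan Banach triple $B$-module described above, with the fixed vector $D(1)\in X$ playing the role of the obstruction, I would then study the trilinear \emph{defect} map on $B_{sa}^{3}$.

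The heart of the argument is to control the map
$$\check{D}(a,b,c) := D(\J abc) - \J {D(a)}bc - \J a{D(b)}c - \J ab{D(c)}.$$
Because $D$ is a generalised Jordan derivation, all the genuine triple-derivation terms cancel and one is left precisely with the closed expression displayed above, in which $\check{D}(a,b,c)$ is written solely through the operators $U_{\cdot,\cdot}$ and the Jordan circle products applied to the single fixed element $D(1)$. Since the Jordan product of $B$ and the module action on $X$ are jointly continuous and $D(1)$ is one fixed vector, this exhibits $\check{D}|_{B_{sa}^{3}}$ as a \emph{bounded} trilinear operator, which is exactly the defining property making $D$ a generalised triple derivation in the sense of \cite[\S 4]{GarPe}.

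With $D$ recognised as a generalised triple derivation, I would invoke \cite[Proposition 21]{GarPe} (cf. also \cite[Theorem 22]{GarPe}) to conclude that $D|_{B_{sa}}$ is continuous. Finally I would promote this to continuity on all of $B$ by the standard self-adjoint decomposition: each $b\in B$ writes as $b = h + i k$ with $h,k\in B_{sa}$ and $\|h\|,\|k\|\le \|b\|$, so the linearity of $D$ gives $D(b) = D(h) + i\,D(k)$ and hence a global norm bound from the bound on $B_{sa}$.

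I expect the main obstacle to be the verification of the closed-form identity for $\check{D}$. This is the one place where the \emph{generalised} hypothesis is genuinely used: it requires expanding $\J abc$ through the prescribed triple products on $X$ and repeatedly substituting the generalised Leibniz rule $D(a\circ b) = D(a)\circ b + a\circ D(b) - U_{a,b}D(1)$, so that every term not involving $D(1)$ annihilates. The computation is essentially mechanical but delicate. A secondary point requiring care is to confirm that the triple-module structure chosen on $X$ is the one compatible with $B$ viewed as a JB$^*$-triple, so that \cite[Proposition 21]{GarPe} genuinely applies to $D$.
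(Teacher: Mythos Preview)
Your proposal is correct and follows essentially the same route as the paper: regard $X$ as a Jordan Banach triple $B$-module, verify that the defect $\check{D}$ on $B_{sa}^3$ is a bounded trilinear map depending only on $D(1)$, invoke \cite[Proposition 21]{GarPe} to get continuity on $B_{sa}$, and pass to $B$ by the self-adjoint decomposition. One small remark: your appeal to Remark~\ref{r GJder are Gder} for the converse implication is not quite clean (that argument uses boundedness), but this is harmless since, as you note, the trivial direction ``generalised derivation $\Rightarrow$ generalised Jordan derivation'' already suffices for the reduction.
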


Despite the automatic continuity of generalised derivations, in the results included in this section,
local triple derivations, generalised derivations and generalised Jordan derivations
are assumed to be continuous, and these assumptions are needed in the arguments. 
The results established by J. Li and Zh. Pan in \cite[Proposition 1.1 and Corollary 2.9]{LiPan}
on generalised derivations need to assume hypothesis of continuity. 

\begin{problem}\label{problem continuity local triple derivations}\cite[Conjecture 6.2 (C2)]{Mack}
Is every local triple derivation, not assumed a priori to be continuous, on a C$^*$-algebra or on a JB$^*$-triple $E$ continuous?
\end{problem}

\section{Local triple derivations on unital JB$^*$-algebras}

Every JB$^*$-algebra $J$ can be equipped with a structure of JB$^*$-triple with respect to the product
$$\J abc := (a \circ b^*) \circ c + (c \circ b^*) \circ a - (a \circ c) \circ b^*.$$
A Jordan derivation on $J$ is a linear mapping $d : J\to J$ satisfying $d(a\circ b) = d(a)\circ b + a\circ d(b)$,
for every $a,b\in J$. Given a Jordan-Banach triple $J$-module $X$, a conjugate linear mapping $\delta: J \to X$
is said to be a triple derivation whenever the identity $$\delta \J abc = \J {\delta (a)}bc + \J a{\delta (b)}c+ \J ab{\delta (c)},$$
holds for every $a,b,c\in J$.\smallskip

According to what we did in the setting of C$^*$-algebras, given a unital JB$^*$-algebra $J$ and a JB$^*$-subalgebra, $A$,
containing the unit of $J$, $J$ can be regarded as a Jordan-Banach $J$-module and a Jordan-Banach triple $A$-module with respect to
its natural Jordan product and its natural triple product, respectively. By a little abuse of notation, a linear mapping $\delta : A\to J$
satisfying $\delta \J abc = \J {\delta (a)}bc + \J a{\delta (b)}c+ \J ab{\delta (c)},$ for every $a,b,c\in A,$ is said to be a \emph{triple derivation}.
A \emph{local triple derivation} from $A$ to $J$ is bounded linear operator $T : A \to B$ such that for each $a\in A$ there exists a triple derivation $\delta_{a} : A \to J$ satisfying $T(a) = \delta_a (a)$.\smallskip

Arguing as in the previous section, we have:

\begin{lemma}\label{l 1 b}\cite[Lemma 1]{HoMarPeRu} Let $A$ be a JB$^*$-subalgebra of a unital JB$^*$-algebra $J$
containing the unit of $J$, and let $T : A\to J$ be a local triple derivation.
Then $T(1)^* = -T(1)$.$\hfill\Box$
\end{lemma}

As in the C$^*$-setting, the mapping $\delta(T(1),1) = L(T(1),1) -L(1,T(1)) : A \to J$
is an inner triple or ternary derivation, $\delta(T(1),1) (1) = 2 T(1)$, and $\widetilde{T}=T- \frac12 \delta(T(1),1)$
is a local triple derivation with $\widetilde{T} (1) = 0$.\smallskip

Motivated by the definitions made in the associative setting, a linear mapping $D:A\to J$
is a \emph{generalised Jordan derivation} whenever $D (a\circ b) = D(a)\circ b + a\circ D(b) - U_{a,b} D(1)$,
for every $a,b$ in $A$. Every generalised Jordan derivation $D : A\to J$ with $D(1) =0$ is a Jordan derivation
and every triple derivation $\delta : A\to J$ is a generalised Jordan derivation.\smallskip

The proof of Proposition \ref{p l 3a} remains valid in the following sense:

\begin{proposition}
\label{p l 3a b} Let $A$ be the (associative) JB$^*$-subalgebra of
a unital JB$^*$-algebra $J$ generated by a self-adjoint element $a$ and the unit of $J$.
Suppose $T : A \to J$ is a local triple derivation, then $T$ is a generalised Jordan derivation.$\hfill\Box$
\end{proposition}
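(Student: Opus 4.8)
The plan is to reduce Proposition \ref{p l 3a b} to the already-established commutative C$^*$-case, Proposition \ref{p l 3a}, by exploiting the hypothesis that $A$ is the associative JB$^*$-subalgebra generated by a single self-adjoint element $a$ together with the unit. The key structural observation is that such an $A$ is in fact a \emph{commutative C$^*$-algebra}: the associative subalgebra generated by a self-adjoint element and $1$ in a JB$^*$-algebra is associative and commutative, and its Jordan product coincides with an honest associative product making it isometrically $^*$-isomorphic to some $C(K)$. On this $A$, the Jordan triple product $\J abc = (a\circ b^*)\circ c + (c\circ b^*)\circ a - (a\circ c)\circ b^*$ collapses precisely to the C$^*$-triple product $\frac12(ab^*c + cb^*a)$, since on a commutative associative algebra $a\circ x = ax$. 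So $A$ with its Jordan-triple structure is, verbatim, a commutative unital C$^*$-algebra sitting inside a JB$^*$-triple $J$.

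First I would make this identification explicit: note that $A$ is commutative and associative, rewrite $\circ$ as the associative product and $U_{a,b}$ as the usual $\frac12(axb+bxa)$, and observe that all the triple-product identities appearing in the proof of Proposition \ref{p l 3a} are literally the same expressions. Second, I would check that every ingredient used in that earlier proof carries over. The proof of Proposition \ref{p l 3a} rests on: (i) Lemma \ref{l 2}, whose statement is given for JB$^*$-subtriples of JB$^*$-triples and so applies directly here; (ii) Goldstein's theorem on orthogonal bilinear forms \cite[Theorem 1.10]{Gold}, which is a statement about C$^*$-algebras and therefore applies to $A\cong C(K)$; and (iii) the Hahn-Banach separation argument over $\varphi\in J^*$, which only uses that $J$ is a Banach space. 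None of these steps uses anything specific to the ambient object being a C$^*$-algebra rather than a JB$^*$-triple, so they transfer unchanged.

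Concretely, I would fix $\varphi\in J^*$, define $W_\varphi(x,y,z):=\varphi(\J x{T(y)}z)$, and reproduce the two successive applications of Goldstein's theorem to obtain the orthogonal forms $V$ and $V_2$ on the commutative C$^*$-algebra $A$. This yields the identity $(\ref{eq p 1 3a 3})$, namely $\J x{T(ys)}t = \J x{T(y)}{s^*t} + \J {xy^*}{T(s)}t - \J {xy^*}{T(1)}{s^*t}$, now read inside $J$ with the Jordan triple product. Setting $x=t=1$ and using $\J 1z1 = z^*$ together with the definition of $\circ$ and $U_{y^*,s^*}$ then gives the generalised Jordan derivation identity $T(ys)^* = T(y)^*\circ s^* + y^*\circ T(s)^* - U_{y^*,s^*}(T(1)^*)$, equivalently $D(a\circ b)=D(a)\circ b + a\circ D(b) - U_{a,b}D(1)$ for $D=T$.

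The main obstacle I anticipate is purely bookkeeping rather than conceptual: one must verify carefully that the reduction identities $\J 1z1=z^*$ and the evaluation of $U_{a,b}(\delta(1))$ hold in the Jordan setting exactly as they did in the C$^*$-computation preceding Proposition \ref{p l 3a}, using Lemma \ref{l 1 b} to supply $T(1)^*=-T(1)$. A subtle point worth flagging is that $A$ is commutative but $J$ need not be, so intermediate products such as $\J xyz$ for $x,y,z\in J$ are governed by the Jordan triple axioms and not by an associative product; however, in the proof every triple bracket has all three slots occupied by elements of $A$ (the inputs $x,y,s,t$ and their adjoints all lie in $A$), and only the scalar-valued form $W_\varphi$ reaches out to $J^*$, so the non-commutativity of $J$ never enters the computation. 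This is precisely why the earlier proof ``remains valid'' and why the statement can be recorded with a one-line justification.
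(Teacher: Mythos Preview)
Your proposal is correct and matches the paper's own justification, which simply records that the proof of Proposition~\ref{p l 3a} carries over verbatim to this setting. One caveat to your final paragraph: the middle slot of each bracket $\{x, T(y), t\}$ lies in $J$, not in $A$; the argument still goes through because the identities $\{1,z,1\}=z^{*}$, $\{1,z,s^{*}\}=z^{*}\circ s^{*}$, and $\{y^{*},z,s^{*}\}=U_{y^{*},s^{*}}(z^{*})$ hold in any unital JB$^{*}$-algebra for arbitrary $z\in J$, which is exactly the verification you flagged as needing to be done.
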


Since the proof of Lemma \ref{l local triple derivation is symmetric} remains valid in the Jordan setting, the reasoning
given in Corollary \ref{c local triple derivation are generalised Jordan derivation}
and Theorem \ref{th Kadison thm for triple derivations on unital C*-algebras} can be rephrased to prove the following:

\begin{theorem}\label{th Kadison thm for triple derivations on unital JB*-algebras}
Let $J$ be a unital JB$^*$-algebra. Every local triple derivation from $J$ to $J$ is a triple derivation.$\hfill\Box$
\end{theorem}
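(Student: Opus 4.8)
The plan is to reduce the general unital JB$^*$-algebra case to the abelian associative situation already handled, exactly mirroring the C$^*$-argument in Theorem \ref{th Kadison thm for triple derivations on unital C*-algebras}. First I would replace $T$ by $\widetilde{T}=T-\frac12\delta(T(1),1)$, which by Lemma \ref{l 1 b} and the comments following it is again a local triple derivation satisfying $\widetilde{T}(1)=0$. It suffices to show that such a $\widetilde{T}$ with $\widetilde{T}(1)=0$ is a triple derivation, since adding back the inner triple derivation $\frac12\delta(T(1),1)$ preserves this property.

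Next I would localize to associative subalgebras. For each self-adjoint $a\in J$, let $A$ be the (associative) JB$^*$-subalgebra generated by $a$ and the unit $1$. Then $\widetilde{T}|_A : A\to J$ is a local triple derivation with $\widetilde{T}|_A(1)=0$, so Proposition \ref{p l 3a b} applies and gives that $\widetilde{T}|_A$ is a generalised Jordan derivation; since $\widetilde{T}|_A(1)=0$, it is an ordinary Jordan derivation, hence $\widetilde{T}(a\circ b)=\widetilde{T}(a)\circ b+a\circ\widetilde{T}(b)$ for $a,b\in A$. In particular $\widetilde{T}(a^2)=\widetilde{T}(a)\circ a + a\circ\widetilde{T}(a)=2\,\widetilde{T}(a)\circ a$ for this single $a$. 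Because the self-adjoint element was arbitrary, I can apply this to $a+b$ for arbitrary $a,b\in J_{sa}$ and polarize: expanding $\widetilde{T}((a+b)^2)$ and subtracting the identities for $a^2$ and $b^2$ yields the Jordan identity $\widetilde{T}(a\circ b)=\widetilde{T}(a)\circ b+a\circ\widetilde{T}(b)$ for all $a,b\in J_{sa}$, and hence by linearity for all $a,b\in J$. Thus $\widetilde{T}$ is a Jordan derivation on all of $J$.

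Finally I would upgrade the Jordan derivation to a triple derivation. Since the proof of Lemma \ref{l local triple derivation is symmetric} is asserted to remain valid in the Jordan setting, $\widetilde{T}$ is a symmetric operator, i.e. $\widetilde{T}(x^*)=\widetilde{T}(x)^*$ for all $x\in J$. A symmetric Jordan derivation $d$ is automatically a triple derivation for the triple product $\J abc=(a\circ b^*)\circ c+(c\circ b^*)\circ a-(a\circ c)\circ b^*$: applying $d$ to this expression, distributing the Jordan-derivation identity over each $\circ$, and using $d(b^*)=d(b)^*$ to handle the middle variable, the terms reassemble into $\J{d(a)}bc+\J a{d(b)}c+\J ab{d(c)}$. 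Re-adding $\frac12\delta(T(1),1)$ then shows $T$ itself is a triple derivation.

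The delicate point, and the step I expect to require the most care, is the claimed validity of Lemma \ref{l local triple derivation is symmetric} in the Jordan setting, which establishes symmetry of $\widetilde{T}$. That argument passes through unitaries, the relation $T(u)=-u\,T(u^*)\,u$ coming from the derivation property, and the Russo--Dye theorem; in a genuine JB$^*$-algebra the associative products $u\,T(u^*)\,u$ must be reinterpreted via the triple product, and one must verify that the Russo--Dye type density of convex combinations of unitaries still lets one pass from unitaries to all self-adjoint (hence all) elements. Everything else—the reduction via $\widetilde{T}$, the localization to $A$, and the polarization—is formally identical to the C$^*$-case and should transcribe directly.
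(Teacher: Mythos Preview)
Your proposal is correct and follows exactly the approach the paper intends: the paper itself gives no explicit proof here, merely asserting that ``the reasoning given in Corollary \ref{c local triple derivation are generalised Jordan derivation} and Theorem \ref{th Kadison thm for triple derivations on unital C*-algebras} can be rephrased'' and that ``the proof of Lemma \ref{l local triple derivation is symmetric} remains valid in the Jordan setting,'' which is precisely the outline you have written out. Your observation that Johnson's theorem is bypassed in the Jordan case (a symmetric Jordan derivation being directly a triple derivation for the product $\J abc=(a\circ b^*)\circ c+(c\circ b^*)\circ a-(a\circ c)\circ b^*$) is the correct adaptation, and you rightly flag the symmetry lemma as the one step whose Jordan translation (via $U_u$ and a Jordan Russo--Dye theorem) deserves care---the paper itself offers no more detail there than you do.
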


\bigskip\bigskip


\begin{thebibliography}{10}






\bibitem{BarFri} T.J. Barton, Y. Friedman, Bounded derivations of JB$^*$-triples,
\emph{Quart. J. Math. Oxford} \textbf{41}, 255-268 (1990).










\bibitem{BurFerGarMarPe} M. Burgos, F.J. Fern{\'a}ndez-Polo, J. Garc{\'e}s, J.
Mart{\'\i}nez, A.M. Peralta, Orthogonality preservers in
C$^*$-algebras, JB$^*$-algebras and JB$^*$-triples, \emph{J. Math. Anal.
Appl.} \textbf{348}, 220-233 (2008).

\bibitem{BurFerGarPe} M. Burgos, F.J. Fern{\' a}ndez-Polo, J.J.
Garc{\'e}s, and A.M. Peralta, Orthogonality preservers Revisited,
\emph{Asian-European Journal of Mathematics} \textbf{2}, No. 3,
387-405 (2009).









\bibitem{Crist} R. Crist, Local derivations on operator algebras, \emph{J. Funct. Anal.} \textbf{135}, 72-92 (1996).














\bibitem{GarPe} J.J. Garc{\'e}s, A.M. Peralta, Generalised triple homomorphisms and Derivations, to appear in \emph{Canadian J. Math.}

\bibitem{Gold} S. Goldstein, Stationarity of operator algebras,
\emph{J. Funct. Anal.} \textbf{118}, no. 2, 275-308 (1993).


\bibitem{HaaLaust} U. Haagerup, N.J. Laustsen, Weak amenability of C$^*$-algebras and a theorem of Goldstein, in  \emph{Banach algebras '97 (Blaubeuren)}, 223-243, de Gruyter, Berlin, 1998.

\bibitem{HadLi} D. Hadwin, J. Li, Local derivations and local automorphisms, \emph{J. Math. Anal. Appl.} \textbf{290}, 702-714 (2004).

\bibitem{HadLi08} D. Hadwin, J. Li, Local derivations and local automorphisms on some algebras, \emph{J. Operator Theory} \textbf{60}, 29-44 (2008).

\bibitem{Hv} B. Hvala, Generalized derivations in rings, \emph{Comm. Algebra} \textbf{26}, 1147-1166 (1998).








\bibitem{HoMarPeRu} T. Ho, J. Martinez-Moreno, A.M. Peralta, B. Russo,
 Derivations on real and complex JB$^\ast$-triples, \emph{J. London Math. Soc.} (2)
 \textbf{65}, no. 1, 85-102 (2002).


\bibitem{HoPeRu} T. Ho, A.M. Peralta, B. Russo, Ternary Weakly Amenable C$^*$-algebras and JB$^*$-triples,
to appear in \emph{Quart. J. Math. (Oxford)}.








\bibitem{John87} B.E. Johnson, Continuity of generalized homomorphisms,
\emph{Bull. London Math. Soc.} \textbf{19},  no. 1, 67-71 (1987).

\bibitem{John96} B.E. Johnson, Symmetric amenability and the nonexistence of Lie and
Jordan derivations, \emph{Math. Proc. Cambridge Philos. Soc.}
\textbf{120}, no. 3, 455-473 (1996).

\bibitem{John01} B.E. Johnson, Local derivations on C$^*$-algebras are derivations,
\emph{Trans. Amer. Math. Soc.} \textbf{353}, 313-325 (2001).




\bibitem{Kad90} R. Kadison, Local derivations, \emph{J. Algebra} \textbf{130}, no. 2, 494–509 (1990).




\bibitem{Ka} W. Kaup, A Riemann Mapping Theorem for bounded symmentric domains in complex Banach spaces, \emph{Math. Z.} \textbf{183}, 503-529 (1983).









\bibitem{LarSo} D. Larson, A. Sourour, Local derivations and local automorphisms of $B(X)$, \emph{Proc. Sympos. Pure Math.} \textbf{51}, 187-194  (1990).

\bibitem{LiPan} J. Li, Zh. Pan, Annihilator-preserving maps, multipliers, and derivations, \emph{Linear Algebra Appl.}
\textbf{423}, 5-13 (2010).

\bibitem{LiPanXu} J. Li, Z. Pan, H. Xu, Characterizations of isomorphisms and derivations of some algebras,
\emph{J. Math. Anal. Appl.} \textbf{332}, 1314-1322 (2007).




\bibitem{Mack} M. Mackey, Local derivations on Jordan triples, preprint 2012 (arXiv:1207.5394v1).







\bibitem{PaPeVi} C. Palazuelos, A.M. Peralta, I. Villanueva, Orthogonally Additive Polynomials on C$^*$-algebras,
\emph{Quart. J. Math. Oxford.} \textbf{59}, (3), 363-374 (2008).




\bibitem{PeRu} A.M. Peralta, B. Russo, Automatic continuity of derivations on
C$^*$-algebras and JB$^*$-triples, preprint 2010.







\bibitem{Ringrose72} J.R. Ringrose, {Automatic continuity of derivations of operator algebras},
\emph{J. London Math. Soc.} (2) \textbf{5} , 432-438 (1972).







\bibitem{Shu} V. Shulman, Operators preserving ideals in C$^*$-algebras,
\emph{Studia Math.} \textbf{109}, 67-72 (1994).















\bibitem{ZhanPanYang} J. Zhang, F. Pan, A. Yang, Local derivations on certain CSL algebras,
\emph{Linear Algebra Appl.} \textbf{413}, 93-99 (2006).

\end{thebibliography}
\end{document}